\newtheorem{thm}{Theorem}[section]
 \newtheorem{lem}[thm]{Lemma}
 \newtheorem{rem}[thm]{Remark}
\journal{Journal of \LaTeX\ Templates}
\begin{document}

\begin{frontmatter}

\title{Life-Span of Semilinear Wave Equations with Scale-invariant Damping: Critical Strauss Exponent Case}

\author{
Ziheng Tu
\footnote{School of Data Science, Zhejiang University of Finance and Economics, 310018, Hangzhou, P.R.China.
 e-mail: tuziheng@zufe.edu.cn.}
\quad
Jiayun Lin
\footnote{
Department of Mathematics and Science, School of Sciences, Zhejiang Sci-Tech University, 310018, Hangzhou, P.R.China.
 email: jylin84@hotmail.com.}
}

\begin{abstract}
The blow up problem of the semilinear scale-invariant damping wave equation with critical Strauss type exponent is investigated. The life span is shown to be: $T(\varepsilon)\leq C\exp(\varepsilon^{-2p(p-1)})$ when $p=p_S(n+\mu)$ for $0<\mu<\frac{n^2+n+2}{n+2}$. This result completes our previous study \cite{Tu-Lin} on the sub-Strauss type exponent $p<p_S(n+\mu)$. Our novelty is to construct the suitable test function from the modified Bessel function. This approach might be also applied to the other type damping wave equations.
\end{abstract}

\begin{keyword}
Damped wave equation; Semilinear; Critical exponent; Lifespan.

\MSC[2010] 35L71, secondary 35B44

\end{keyword}

\end{frontmatter}

\section{Introduction and Main Results}
In this paper, we consider the blow up problem of following semilinear damping wave equation
\begin{equation}\label{main}
\left\{
\begin{array}{ll}
u_{tt}-\Delta u+\frac{\mu}{1+t}u_t=|u|^p\ &(x,t)\in\ \mathbb{R}^n\times[0,\infty) \\
u(x,0)=\varepsilon f(x),\ \ \ \ u_t(x,0)=\varepsilon g(x)\ &x\in\ \mathbb{R}^n,\\
\end{array}
\right.
\end{equation}
where $\mu\geq0$, $f,\ g\in C_0^{\infty}(\mathbb{R}^n)$. We assume that $\varepsilon>0$ is a "small" parameter, and the support of $f$ and $g$ satisfy
\begin{equation}
\mbox{supp}\ (f,\ g)\subset\left\{x\left|\ |x|\leq\frac12\right.\right\}.
\end{equation}
The damping term $\frac{\mu}{1+t}u_t$ is so called as "scale invariant"  since it shares same scaling as $u_{tt}$:
$$\widetilde{u}(t,x)=u(\lambda(1+t)-1,\lambda x).$$
For this typical damping, the asymptotic behavior of linear solution heavily relies on the size of $\mu$ see \cite{Wirt04}. Meanwhile, the blowup problem or the determination of the critical exponent of the semilinear equation has drawn great of attention. Wakasugi \cite{Wakasugi14} has obtained the blowup result if $1<p\leq p_F(n)$ and $\mu>1$, or $1<p\leq 1+\frac2{n+\mu-1}$ and $0<\mu\leq1$. In \cite{WakasugiT}, the upper bound of the lifespan is shown:
\begin{equation*}
\left\{
\begin{array}{ll}
C\varepsilon^{-(p-1)/\{2-n(p-1)\}}\ &\mbox{if}\ 1<p<p_F(n)\ \mbox{and}\ \mu\geq1,\\
C\varepsilon^{-(p-1)/\{2-(n+\mu-1)(p-1)\}}\ &\mbox{if}\ 1<p<1+\frac2{n+\mu-1}\ \mbox{and}\ 0<\mu<1,\\
\end{array}
\right.
\end{equation*}
where $C$ is a positive constant independent of $\varepsilon$ and $p_F(n)=1+\frac2{n}$ is the Fujita exponent. Due to the application of Liouville transform:
$$w(x,t):=(1+t)^{\frac{\mu}2}u(x,t),$$ $w$ turns out to satisfy the wave equation without damping and mass terms when $\mu=2$. There are some studies focusing on this special case. D'Abbicco-Lucente-Reissig \cite{Dabbi15JDE} have showed the small data global existence if:
\begin{equation*}
\left\{
\begin{array}{ll}
2(=p_F(n)=p_S(n+2))<p<\infty\ &\mbox{for}\ n=2,\ \mu=2\\
p_S(5)<p<\infty\  &\mbox{for}\ n=3,\ \mu=2\\
\end{array}
\right.
\end{equation*}
where $p_S(n)$ is the Strauss exponent,
$$p_S(n):=\frac{n+1+\sqrt{n^2+10n-7}}{2(n-1)}$$
which is the positive root of the quadratic equation:
\begin{equation}\label{qua}
\gamma(p,n):=2+(n+1)p-(n-1)p^2=0.\end{equation}
In the case of $1$ dimension and $\mu=2$, Wakasa \cite{Wakasa16} considered the lifespan and showed that the critical exponent is $p_S(3)$. For general $\mu>0$, D'Abbicco \cite{Dabbi15MMAS} obtained following small data global existence result:
\begin{equation*}
\left\{
\begin{array}{ll}
p_F(1)<p<\infty\ &\mbox{for}\ n=1,\ \ \mu\geq\frac53,\\
p_F(2)<p<\infty\ &\mbox{for}\ n=2,\ \ \mu\geq3,\\
p_F(n)<p\leq\frac{n}{n-2}\ &\mbox{for}\ n\geq3,\ \ \mu\geq n+2.\\
\end{array}
\right.
\end{equation*}

Recently, Lai-Takamura-Wakasa \cite{Lai17} applied test function method to obtain following blowup result:
$$T(\varepsilon)\leq C\varepsilon^{-2p(p-1)/\gamma(p,n+2\mu)},$$
$$\mbox{for}\ p_F(n)\leq p< p_S(n+2\mu),\ \ n\geq2\ \ \mbox{and}\ \ 0<\mu<\frac{n^2+n+2}{2(n+2)}.$$
By introducing modified Bessel function $K_{\nu}(t)$ as new test function, Tu-Lin \cite{Tu-Lin} updates their result to:
$$T(\varepsilon)\leq C\varepsilon^{-2p(p-1)/\gamma(p,n+\mu)},$$
$$\mbox{for}\ p_F(n+\frac\mu2)< p<p_S(n+\mu),\ \ n\geq2\ \ \mbox{and}\ \ \mu>0.$$
Ikeda-Sobajima \cite{Ikedapre} obtained similar result. Besides, they also give the life-span of critical case, i.e, $p=p_S(n+\mu)$ with lifespan
$$T(\varepsilon)\leq \exp(C\varepsilon ^{-p(p-1)}),$$
where $\mu$ is required to less than $\mu_*:=\frac{n^2+2+2}{n+2}$.
Their proof relies on the use of hypergeometric function, which is initiated from Zhou-Han \cite{Zhou14}.
In the present paper, we shall give an alternative way to construct the test function.
By applying the wave plane formula and limiting behavior of second type of modified Bessel function,
we succeed to obtain the properties which is critical to the test function. We emphasis that this approach is normal and can be also applied
to some other type of Laplacian with variable coefficients.

Our main result is stated in the following.
\begin{thm}
For the Cauchy problem \eqref{main} with space dimension $n\geq2$, $0<\mu<\mu_*$ and $p=p_S(n+\mu)$, let initial values $f,\ g$ be nonnegative smooth function with compact support in $\{|x|<\frac12\}$. Suppose that a solution $u$ of \eqref{main} satisfies:
$$supp\ u\subset\{(x,t)\in \mathbb{R}^n\times [0,T): |x|\leq t+\frac12\}.$$
Then lifespan $T<\infty$ and there exists a positive constant $C$ which is independent of $\varepsilon$ such that
$$T(\varepsilon)\leq \exp(C\varepsilon^{-p(p-1)}).$$
\end{thm}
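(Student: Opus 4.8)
The plan is to use the \emph{test function method} combined with an iteration argument on a time-weighted integral of the solution. The critical case $p=p_S(n+\mu)$ differs from the sub-critical case treated in \cite{Tu-Lin} in that the basic power-type integral inequality no longer improves fast enough to force blow-up in finite polynomial time; instead the exponent degeneracy at criticality forces a logarithmic (iterated-slice) argument that yields the exponential lifespan bound $T(\varepsilon)\leq \exp(C\varepsilon^{-p(p-1)})$.

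First I would construct the test function. Following the stated novelty, I take $\psi(x,t)$ built from the second-kind modified Bessel function $K_\nu$, with the index $\nu$ tuned to $\mu$ so that $\psi$ solves (or nearly solves) the \emph{adjoint} linear equation $\psi_{tt}-\Delta\psi-\partial_t\!\big(\tfrac{\mu}{1+t}\psi\big)=0$ in the backward cone. Using the plane-wave representation one writes $\psi(x,t)=e^{x\cdot\omega}\,\lambda(t)$ integrated over the unit sphere, where $\lambda(t)$ is the $K_\nu$-type profile; the limiting asymptotics of $K_\nu$ at both $t\to 0$ and $t\to\infty$ give the decay/growth estimates needed. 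The key quantitative output is a test function $\psi$ supported suitably in the light cone with the sharp bound $\int_{|x|\le t+1/2}\psi(x,t)^{p'}\,dx \lesssim (1+t)^{\theta}$ for the critical threshold exponent $\theta$ that makes the borderline integral logarithmically divergent.

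Next I would set up the functional $F(t)=\int_{\mathbb{R}^n} u(x,t)\,\psi(x,t)\,dx$ (or a related time-weighted average). Multiplying \eqref{main} by $\psi$, integrating over $\mathbb{R}^n\times[0,t]$, and integrating by parts twice moves all derivatives onto $\psi$; the boundary terms at time $0$ reproduce the data $\varepsilon\int(f,g)\psi$, which are strictly positive by the nonnegativity assumption on $f,g$. This yields a lower bound $F(t)\ge C\varepsilon + \int_0^t\!\!\int |u|^p\psi$, and then \emph{Hölder's inequality} in the space variable, using the $L^{p'}$ bound on $\psi$ above, converts $\int|u|^p\psi$ into a term controlled from below by $|F(s)|^p$ divided by the weight. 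The resulting inequality takes the schematic form $F(t)\ge C\varepsilon + c\int_0^t (1+s)^{-\beta}|F(s)|^p\,ds$ where $\beta$ is exactly the critical weight for which $\int_0^\infty(1+s)^{-\beta}\,ds$ diverges only logarithmically.

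Finally, I would run the iteration on this integral inequality. Because $p=p_S(n+\mu)$ is critical the naive polynomial ansatz $F(t)\ge D_j(1+t)^{a_j}(\log(1+t))^{b_j}$ must carry logarithmic factors; inserting this into the integral inequality and tracking how $b_j$ grows produces a doubly-exponentially increasing sequence of lower-bound constants, and one shows that the bound can persist only for $\log(1+T)\le C\varepsilon^{-p(p-1)}$, i.e. $T\le\exp(C\varepsilon^{-p(p-1)})$. The main obstacle I expect is twofold: first, verifying the \emph{sharp} asymptotics and support properties of the $K_\nu$-based test function so that the weight $\beta$ in the integral inequality is exactly critical (any slack here changes the lifespan exponent), and second, executing the logarithmic iteration so that the accumulated logarithm powers yield precisely the $\varepsilon^{-p(p-1)}$ rate rather than a weaker bound. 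The restriction $0<\mu<\mu_*=\tfrac{n^2+n+2}{n+2}$ should enter precisely at the step where the test-function decay estimate must dominate the light-cone volume growth, and I would check that this is the exact range in which the borderline Hölder estimate closes.
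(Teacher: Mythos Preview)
Your outline captures the right qualitative picture---a Bessel-based test function satisfying the adjoint equation, H\"older to convert the nonlinearity into a self-referential inequality, a logarithmic borderline at criticality, and the restriction $\mu<\mu_*$ entering at the test-function step---but the paper's execution differs in two substantive ways, and the first is a point where your plan as written would likely not close. The paper does \emph{not} work with a single test function $\psi=\lambda_1(t)\varphi_1(x)$; it integrates a one-parameter family over a frequency variable $\eta\in(0,1)$ to form
\[
b_q(x,t)=\int_0^1\lambda_\eta(t)\varphi_\eta(x)\,\eta^{q-1}\,d\eta,\qquad q=\tfrac{n-\mu-1}{2}-\tfrac1p.
\]
This $\eta$-integration converts the exponential profile of a single $\psi$ into a \emph{polynomial} one (Lemma~2.1(iii)), and the free parameter $q$ is precisely what lets one tune the H\"older weight to land on the critical exponent. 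With the single $\psi$ you propose, $\int_{|x|\le t+1/2}\psi^{p'}$ does not in general hit the borderline; this is the same obstruction that forced Zhou--Han to introduce a $q$-parameterized test function in the undamped critical case, and the present paper follows that template with $b_q$ as the damped analogue. The restriction $\mu<\mu_*$ indeed appears exactly as the integrability constraint $q>-\min(\mu,1)$, $q>-\mu/2$ on $b_q$ (Remark~2.3), so you located it correctly.

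Second, the paper does not run an iteration. The central functional is $G(t)=\int_0^t(t-\tau)(1+\tau)\int_{\mathbb{R}^n}b_q|u|^p\,dx\,d\tau$; testing the equation by $b_q$ and integrating three times in $t$ yields (Lemma~2.5)
\[
G'(t)\geq K\,(\ln(1+t))^{1-p}(1+t)\Bigl(\int_0^t(1+\tau)^{-3}G(\tau)\,d\tau\Bigr)^{p},
\]
together with the lower bound $G(t)\gtrsim\varepsilon^p t^2$ coming from the a~priori estimate of Lemma~2.4. One then sets $H(t)=\int_0^t(1+\tau)^{-3}G(\tau)\,d\tau$, substitutes $1+t=e^\tau$, rescales by $\tau=\varepsilon^{-p(p-1)}s$, and applies an ODE comparison lemma (Lemma~3.1, from Zhou--Han) against an explicit Riccati supersolution to force blow-up before $s=s^*$, giving $T\leq\exp(C\varepsilon^{-p(p-1)})$. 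Your logarithmic iteration is a legitimate alternative at this final stage and has been used in related critical problems, so that difference is a genuine choice of route; but the $\eta$-integrated, $q$-tuned test function $b_q$ is the essential ingredient you are missing, and without it the H\"older step will not produce the exactly-critical weight your iteration needs.
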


\begin{rem}

We note that when $\mu=\mu_*$, the Fujita exponent $p_F(n)$ equals to Strauss exponent $p_S(n+\mu)$. This suggests that $\mu_*$ might be the threshold and the critical exponent of invariant damping wave equation would be in form of $p=\max(p_S(n+\mu),\ p_F(n))$ for any $\mu>0$. However, combing the result of \cite{Dabbi15MMAS} and \cite{Wakasugi14}, there is still gaps. To be precisely, the global small data result of following cases are lacking:
\begin{equation*}
\left\{
\begin{array}{ll}
p_F(1)<p<\infty,\ \frac43<\mu<\frac53,\ &\mbox{for}\ n=1\\
p_F(2)<p<\infty,\ 2<\mu<3,\ &\mbox{for}\ n=2\\
p_F(n)<p\leq\frac{n}{n-2},\ \mu_*(n)<\mu<n+2,\ &\mbox{for}\ n\geq3.\\
\end{array}
\right.
\end{equation*}
and $p>p_S(n+\mu)$ with $0<\mu\leq\mu_*$ for arbitrary $n$. These cases need further consideration.
\end{rem}


The rest of the paper is arranged as follows. In Section $2$, we do some preliminary work. The test function is constructed and some important properties related with are proved. Section $3$ is devoted to prove the main theorem.

\section{Preliminaries}
In this section we prepare some lemmas. We first introduce the second type of modified Bessel function:
$$K_{\nu}(t)=\int_0^\infty\exp(-t\cosh \zeta)\cosh(\nu \zeta)d\zeta,\ \nu\in \mathbb{R},$$
which satisfies:
$$\left(t^2\frac{d^2}{dt^2}+t\frac{d}{dt}-(t^2+\nu^2)\right)K_\nu(t)=0,\ t>0.$$
We collect some facts about $K_{\nu}(t)$ from \cite{Gaunt}. The monotonicity with respect to the order $\nu$:
\begin{equation}\label{mono}
K_\nu(t)<K_{\nu-1}(t)\ \ \mbox{for}\ \nu<\frac12\ \ \ \mbox{and}\ \ \ K_\nu(t)\geq K_{\nu-1}(t)\ \ \mbox{for}\ \nu\geq\frac12.
\end{equation}
The limiting forms of $K_{\nu}(t)$, for $t\gg1$,
\begin{equation}\label{K1}
K_{\nu}(t)=\sqrt{\frac{\pi}{2t}}e^{-t}[1+O(t^{-1})].
\end{equation}
For $0<t\ll1$,
\begin{equation}\label{K2}
K_{\nu}(t)\sim\frac12\Gamma(|\nu|)(\frac12t)^{-|\nu|}\ \ \mbox{for}\ \nu\neq0 \ \ \mbox{and}\ \ K_{\nu}(t)\sim -\ln t\ \ \mbox{for}\ \nu=0.
\end{equation}
The derivative identity:
\begin{eqnarray}\label{K3}
\frac{d}{dt}K_{\nu}(t)&=&-K_{\nu+1}(t)+\frac{\nu}{t}K_{\nu}(t)\\
&=&-\frac12(K_{\nu+1}(t)+K_{\nu-1}(t)).\label{K4}
\end{eqnarray}

We now start to construct our test function $b_q(x,t)$.
For $\eta>0$, let $$\lambda_\eta(t)=(\eta(t+1))^{\frac{\mu+1}{2}}K_{\frac{\mu-1}2}(\eta(t+1))$$
and
$$\varphi_{\eta}(x)=\int_{S^{n-1}}e^{\eta x\cdot \omega}d\omega.$$
It is obvious that $\lambda_\eta(t)$ satisfies
\begin{equation}\label{lambda}
[(1+t)^2\frac{d^2}{dt^2}-\mu(1+t)\frac{d}{dt}+(\mu-\eta^2(1+t)^2)]\lambda_\eta(t)=0, \\
\end{equation}
and $\varphi_\eta(x)$  satisfies
\begin{equation}\label{varphi}\Delta\varphi_{\eta}(x)=\eta^2\varphi_\eta(x).\end{equation}
Define the positive smooth function:
$$b_q(x,t)=\int_0^1\lambda_\eta(t)\varphi_\eta(x)\eta^{q-1}d\eta.$$
Observing \eqref{K2}, to ensure the integrability, $q$ is required to satisfy:
\begin{equation}\label{restri}\frac{\mu+1}2-\frac{|\mu-1|}2+q-1>-1\Longleftrightarrow q>-\min(\mu,1).\end{equation}
We summary the properties of $b_q(x,t)$ in following lemma.
\begin{lem}
$(i)$  $b_{q}(x,t)$ satisfies following two identities $$\left(\frac{\partial^2}{\partial t^2}-\frac\mu{1+t}\frac{\partial}{\partial t}+\frac{\mu}{(1+t)^2}\right)b_{q}(x,t)=b_{q+2}(x,t)$$and
$$\Delta b_q(x,t)=b_{q+2}(x,t).$$
$(ii)$ The following identity holds for $b_{qt}(x,t)$
\begin{equation}\label{bq2}
\frac{\partial}{\partial t}b_q(x,t)=\frac{\mu}{1+t}b_q(x,t)-\int_0^1(\eta(t+1))^{\frac{\mu+1}2}K_{\frac{\mu+1}2}(\eta(t+1))\varphi_\eta(x)\eta^qd\eta.
\end{equation}
$(iii)$ There exists $T_0$, such that for $t>T_0$
\begin{equation}\label{bq3}
b_q(x,t)\sim\left\{
\begin{array}{ll}
(t+1)^{\frac\mu2}(t+1+|x|)^{-q-\frac{\mu}2}\ &\mbox{if}\ -\frac{\mu}2<q<\frac{n-\mu-1}2,\\
(t+1)^{\frac\mu2}(t+1+|x|)^{\frac{-n+1}2}(t+1-|x|)^{\frac{n-\mu-1}{2}-q}\ &\mbox{if}\ q>\frac{n-\mu-1}2.\\
\end{array}
\right.
\end{equation}
\end{lem}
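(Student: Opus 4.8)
The plan is to dispose of parts $(i)$ and $(ii)$ by direct differentiation under the integral sign, which is justified by the integrability condition \eqref{restri} (note $q+2>q>-\min(\mu,1)$, so $b_{q+2}$ is well defined). For $(i)$, dividing \eqref{lambda} by $(1+t)^2$ shows that $\big(\partial_t^2-\frac{\mu}{1+t}\partial_t+\frac{\mu}{(1+t)^2}\big)\lambda_\eta=\eta^2\lambda_\eta$; applying this operator under the integral therefore replaces $\eta^{q-1}$ by $\eta^{q+1}$ and yields $b_{q+2}$, while the Laplacian identity follows the same way from $\Delta\varphi_\eta=\eta^2\varphi_\eta$ in \eqref{varphi}. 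For $(ii)$, the sole computation is $\frac{d}{dt}\lambda_\eta$: writing $s=\eta(1+t)$ and differentiating $s^{(\mu+1)/2}K_{(\mu-1)/2}(s)$, I would insert \eqref{K3} with $\nu=\frac{\mu-1}2$; the term $\frac\nu s K_\nu$ combines with the power-rule term into $\mu\,s^{(\mu-1)/2}K_{(\mu-1)/2}(s)=\frac{\mu}{1+t}\lambda_\eta$, and the $-K_{\nu+1}$ term supplies $-\eta(\eta(1+t))^{(\mu+1)/2}K_{(\mu+1)/2}(\eta(1+t))$. Integrating against $\varphi_\eta(x)\eta^{q-1}$ gives \eqref{bq2} at once.

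Part $(iii)$ is the heart of the matter: it is a Laplace-type matched asymptotic analysis of $b_q=\int_0^1\lambda_\eta\varphi_\eta\,\eta^{q-1}\,d\eta$ as $t\to\infty$, uniform over the cone $|x|\le t+\frac12$. Writing $T:=1+t$ and $r:=|x|$, I would first record the two building blocks. From \eqref{K1}--\eqref{K2}, $\lambda_\eta(t)\sim C(\eta T)^{\mu/2}e^{-\eta T}$ when $\eta T\gg1$ and $\lambda_\eta(t)\sim C(\eta T)^{\min(\mu,1)}$ when $\eta T\ll1$. From the plane-wave representation $\varphi_\eta(x)=c_n(\eta r)^{-(n-2)/2}I_{(n-2)/2}(\eta r)$, the large- and small-argument forms of the Bessel function $I$ give $\varphi_\eta(x)\sim C(\eta r)^{-(n-1)/2}e^{\eta r}$ when $\eta r\gg1$ and $\varphi_\eta(x)\sim\omega_{n-1}$ when $\eta r\lesssim1$. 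The decisive point is that where both arguments are large the product carries the factor $e^{-\eta(T-r)}$, and since $T-r\ge\frac12$ on the cone this controls the whole integral.

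I would then split $[0,1]$ into the ranges $\eta T\lesssim1$, $\{\eta T\gtrsim1,\ \eta r\lesssim1\}$ and $\eta r\gtrsim1$, substituting the matching asymptotics in each. Setting $\beta:=q-\frac{n-\mu-1}2$, the large-argument range reduces, after the substitution $u=\eta(T-r)$, to the model integral
\[
T^{\mu/2}r^{-(n-1)/2}(T-r)^{-\beta}\int_{(T-r)/T}^{\,T-r}u^{\beta-1}e^{-u}\,du ,
\]
and the sign of $\beta$ decides everything. If $\beta>0$, i.e.\ $q>\frac{n-\mu-1}2$, the $u$-integral tends to a bounded quantity, this range dominates, and since it is relevant precisely when $r\asymp T$ (so $r\asymp T+r$) one obtains the second line of \eqref{bq3} via $(T-r)^{-\beta}=(T-r)^{\frac{n-\mu-1}2-q}$. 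If $\beta<0$, i.e.\ $q<\frac{n-\mu-1}2$, the $u$-integral is governed by its lower endpoint and this range contributes at the same order $T^{-q}$ as the range $\eta T\lesssim1$, whose leading part is $r$-independent because $\varphi_\eta\approx\omega_{n-1}$ there; since $T+r\asymp T$ on the cone, $T^{-q}\asymp T^{\mu/2}(T+r)^{-q-\mu/2}$, which is the first line of \eqref{bq3}.

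The main obstacle will be making the matching rigorous and uniform. I would replace the heuristic $\sim$ by two-sided bounds on $\lambda_\eta$ and $\varphi_\eta$ that hold uniformly in their arguments (interpolating between the small- and large-argument forms), so that \eqref{bq3} follows by matching upper and lower bounds range by range; the $O(t^{-1})$ correction in \eqref{K1} and the transition zones between ranges must then be shown to be of lower order. The genuinely delicate regime is the near-light-cone one, $T-r=O(1)$, where the relevant $\eta$ are of order one and the cutoff at $\eta=1$ is felt: here I would check that the endpoint $\eta=1$ contributes only a term of size $e^{-(T-r)}$ times the algebraic prefactor, which for $T-r\ge\frac12$ is dominated by the asserted main term, so that the power law in $(T-r)$ in \eqref{bq3} is preserved.
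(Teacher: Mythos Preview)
Your treatment of parts $(i)$ and $(ii)$ is identical to the paper's: both are disposed of by differentiating under the integral and invoking \eqref{lambda}, \eqref{varphi}, and the derivative identity \eqref{K3} exactly as you describe.

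For part $(iii)$ your approach is correct in outline but genuinely different from the paper's. The paper does \emph{not} split the $\eta$-integral into matched ranges or use the Bessel-$I$ asymptotics of $\varphi_\eta$. Instead it invokes the plane-wave formula
\[
\varphi_\eta(x)=\omega_{n-1}\int_{-1}^{1}(1-\theta^2)^{\frac{n-3}{2}}e^{\theta\eta|x|}\,d\theta,
\]
substitutes $\tilde\eta=(1+t)\eta$, and replaces $K_{(\mu-1)/2}(\tilde\eta)$ globally by $\sqrt{\pi/(2\tilde\eta)}\,e^{-\tilde\eta}$ (the piece on $[0,T_0]$ being harmlessly bounded). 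After the further change $\zeta=\tilde\eta(1-\theta|x|/(1+t))$ the $\zeta$-integral is a uniformly bounded incomplete $\Gamma$-function, and the remaining $\theta$-integral collapses to
\[
h(z)=\int_0^1 \bigl(\tilde\theta(1-\tilde\theta)\bigr)^{\frac{n-3}{2}}(1-\tilde\theta z)^{-(q+\frac{\mu}{2})}\,d\tilde\theta, \qquad z=\frac{2|x|}{t+1+|x|},
\]
which is a hypergeometric function of one variable. The dichotomy in \eqref{bq3} then comes from the classical behaviour of $h(z)$ as $z\to 1^-$, with no range-splitting at all. Your matched-asymptotics route reaches the same endpoint and is conceptually transparent about where the factor $e^{-\eta(T-r)}$ comes from, but it costs you the work of controlling transition zones and making the two-sided bounds uniform; the paper's plane-wave trick trades that bookkeeping for a single elementary endpoint analysis of $h(z)$. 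Note also that your lower limit in the model integral should be $(T-r)/r$ rather than $(T-r)/T$, though this is immaterial in the relevant regime $r\asymp T$.
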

\begin{proof}
$(i)$ The two identities can be proved directly from definition, which means $b_q(x,t)$ satisfy the conjugate equation of \eqref{main}:
$$v_{tt}-\Delta v-(\frac\mu{1+t}v)_t=0.$$
$(ii)$ Utilizing \eqref{K3}, we have
\begin{eqnarray*}
&&\frac{d}{dt}\lambda_\eta(t)=\frac{d}{dt}[(\eta(t+1))^{\frac{\mu+1}{2}}K_{\frac{\mu-1}2}(\eta(t+1))]\\
&=&\frac{\mu+1}2(\eta(t+1))^{\frac{\mu-1}2}\eta K_{\frac{\mu-1}2}(\eta(t+1))+(\eta(t+1))^{\frac{\mu+1}2}\frac{d}{dt}K_{\frac{\mu-1}2}(\eta(t+1))\\
&=&\frac{\mu+1}{2(1+t)}\lambda_\eta(t)+(\eta(t+1))^{\frac{\mu+1}{2}}\eta\left[-K_{\frac{\mu+1}2}(\eta(t+1))+\frac{\mu-1}{2\eta(t+1)}K_{\frac{\mu-1}{2}}(\eta(t+1))\right]\\
&=&\frac{\mu}{1+t}\lambda_\eta(t)-\eta(\eta(t+1))^{\frac{\mu+1}{2}} K_{\frac{\mu+1}2}(\eta(t+1)).
\end{eqnarray*}
Inserting this into $b_q(x,t)$, it is easy to verify \eqref{bq2}.\\
$(iii)$
Applying the plane wave formula to $\varphi_{\eta}(x)$ (see \cite{FJohn}, page 8), we have
\begin{eqnarray*}
b_q(x,t)&=&\int_0^1\lambda_\eta(t)\varphi_{\eta}(x)\eta^{q-1}d\eta\\
&=&\int_0^1\lambda_\eta(t)\omega_{n-1}\int_{-1}^1(1-\theta^2)^{\frac{n-3}{2}}e^{\theta\eta|x|}d\theta\eta^{q-1}d\eta\\
&=&\omega_{n-1}\int_{-1}^1(1-\theta^2)^{\frac{n-3}2}d\theta\int_0^1(t+1)^{\frac{\mu+1}2}K_{\frac{\mu-1}2}(\eta(t+1))e^{\theta\eta|x|}\eta^{q-1+\frac{\mu+1}2}d\eta\\
&=&\omega_{n-1}(t+1)^{-q}\int_{-1}^1(1-\theta^2)^{\frac{n-3}2}d\theta\int_0^{t+1}K_{\frac{\mu-1}2}(\tilde{\eta})e^{\theta\tilde{\eta}\frac{|x|}{t+1}}\tilde{\eta}^{q-1+\frac{\mu+1}2}d\tilde{\eta},
\end{eqnarray*}
where $\tilde{\eta}=(1+t)\eta$.
Due to \eqref{K1}, there exists finite large $T_0$ which is independent with $\varepsilon$ such that for any $\theta\in [-1,1]$ and $(x,t)$ with $\frac{|x|}{1+t}<1$,
\begin{eqnarray*}
\int_{T_0}^{t+1}K_{\frac{\mu-1}2}(\tilde{\eta})e^{\theta\tilde{\eta}\frac{|x|}{t+1}}\tilde{\eta}^{q-1+\frac{\mu+1}2}d\tilde{\eta}&\sim&
\int_{T_0}^{t+1}\sqrt{\frac{\pi}{2\tilde{\eta}}}e^{-\tilde{\eta}+\theta\tilde{\eta}\frac{|x|}{t+1}}\tilde{\eta}^{q-1+\frac{\mu+1}2}d\tilde{\eta}.
\end{eqnarray*}
On the other hand, by \eqref{restri} with \eqref{K2} and the condition $q>-\frac\mu2$, the above integrals on both sides over $[0,T_0]$ are bounded by $C(\mu,\ q,\ T_0)$. Thus we have
$$\int_{0}^{t+1}K_{\frac{\mu-1}2}(\tilde{\eta})e^{\theta\tilde{\eta}\frac{|x|}{t+1}}\tilde{\eta}^{q-1+\frac{\mu+1}2}d\tilde{\eta}\sim\int_{0}^{t+1}\sqrt{\frac{\pi}{2\tilde{\eta}}}e^{-\tilde{\eta}+\theta\tilde{\eta}\frac{|x|}{t+1}}\tilde{\eta}^{q-1+\frac{\mu+1}2}d\tilde{\eta}.$$
Consequently, for $t>T_0$
\begin{eqnarray*}
b_q(x,t)&\sim&(t+1)^{-q}\int_{-1}^1(1-\theta^2)^{\frac{n-3}2}d\theta\int_0^{t+1}e^{-\tilde{\eta}+\theta\tilde{\eta}\frac{|x|}{t+1}}\tilde{\eta}^{q-1+\frac{\mu}2}d\tilde{\eta}\\
&=&(t+1)^{\frac\mu2}\int_{-1}^1(1-\theta^2)^{\frac{n-3}2}(t+1-\theta|x|)^{-(q+\frac{\mu}2)}d\theta\int_0^{t+1-\theta|x|}e^{-\zeta}\zeta^{q-1+\frac\mu2}d\zeta
\end{eqnarray*}
where $\zeta=\frac{t+1-\theta|x|}{t+1}\eta$ and $q>-\frac\mu2$. Notice that for $-1\leq\theta\leq1$, $\frac12\leq t+1-\theta|x|\leq\infty$, we have
$$c_0\leq\int_0^{t+1-\theta|x|} e^{-\zeta}\zeta^{q-1+\mu/2}d\zeta\leq C_0,$$
with $c_0:=\int_0^{\frac12} e^{-\zeta}\zeta^{q-1+\mu/2}d\zeta$ and $C_0:=\int_0^\infty e^{-\zeta}\zeta^{q-1+\mu/2}d\zeta=\Gamma(q+\frac\mu2)$.
Thus
\begin{eqnarray*}
b_q(x,t)&\sim&(t+1)^{\frac\mu2}\int_{-1}^1(1-\theta^2)^{\frac{n-3}2}(t+1-\theta|x|)^{-(q+\frac{\mu}2)}d\theta\\
&=&2^{n-2}(t+1)^{\frac\mu2}\int_{0}^1(\tilde{\theta}(1-\tilde{\theta}))^{\frac{n-3}2}(t+1+|x|-2\tilde{\theta}|x|)^{-(q+\frac{\mu}2)}d\tilde{\theta}\\
&=&2^{n-2}(t+1)^{\frac\mu2}(t+1+|x|)^{-q-\frac{\mu}2}\int_{0}^1(\tilde{\theta}(1-\tilde{\theta}))^{\frac{n-3}2}(1-\tilde{\theta}z)^{-(q+\frac{\mu}2)}d\tilde{\theta}.
\end{eqnarray*}
Here $\tilde{\theta}=\frac{\theta+1}2$ and $z=\frac{2|x|}{t+1+|x|}$. Direct analysis on integral shows for $n\geq 2$,
$$h(z):=\int_{0}^1(\tilde{\theta}(1-\tilde{\theta}))^{\frac{n-3}2}(1-\tilde{\theta}z)^{-(q+\frac{\mu}2)}d\tilde{\theta}$$
is integrable for $0\leq z<1$. For $z=1$, if $-\frac\mu2<q<\frac{n-\mu-1}2$, $h(z)$ is continuous at $1$, thus bounded over $[0,1]$. Otherwise $q>\frac{n-\mu-1}2$,
$h(z)$ behaves as $(1-z)^{\frac{n-\mu-1}2-q}$ around $z=1$. Finally we conclude that
\begin{equation*}
b_q(x,t)\sim\left\{
\begin{array}{ll}
(t+1)^{\frac\mu2}(t+1+|x|)^{-q-\frac{\mu}2}\ &\mbox{if}\ -\frac\mu2<q<\frac{n-\mu-1}2,\\
(t+1)^{\frac\mu2}(t+1+|x|)^{\frac{-n+1}2}(t+1-|x|)^{\frac{n-\mu-1}{2}-q} &\mbox{if}\ q>\frac{n-\mu-1}2.\\
\end{array}
\right.
\end{equation*}
\end{proof}
\begin{rem}
In fact, for $\gamma>\beta>0$, the hypergeometric function has following integral representation
$$F(\alpha,\beta,\gamma;z)=\frac{\Gamma(\gamma)}{\Gamma(\eta)\Gamma(\gamma-\beta)}\int_0^tt^{\beta-1}(1-t)^{\gamma-\beta-1}(1-zt)^{-\alpha}dt,\ |z|<1.$$
That is to say, for $t>T_0$,
\begin{equation*}
b_q(x,t)\sim(t+1)^{\frac\mu2}(t+1+|x|)^{-q-\frac\mu2}F(q+\frac\mu2,\frac{n-1}2,n-1;\frac{2|x|}{t+1+|x|}).
\end{equation*}
\end{rem}
\begin{rem}
As we shall choose $q=\frac{n-\mu-1}{2}-\frac1p$ and $p=p_S(n+\mu)$ in later application, the restriction on $q$, i.e,
$$q>-\min(\mu,1)\ \ \mbox{and}\ \ \ q>-\frac{\mu}2$$
implies that $\mu$ has to satisfy
$$\mu<\mu_*:=\frac{n^2+n+2}{n+2},$$
where $\mu_*$, as we mentioned earlier, also satisfies $p_F(n)=p_S(n+\mu_*)$.
\end{rem}
\begin{lem}
Suppose the Cauchy problem \eqref{main} has an energy solution $u$. If the initial data satisfies that
$$\int_{\mathbb{R}^n}\varphi_1(x)f(x)dx\geq0,\ \ \ \int_{\mathbb{R}^n}\varphi_1(x)g(x)dx\geq0$$
and they are not identically to $0$. Then for $t>T_0$, for any $p>1$
\begin{equation}\label{Priori}
\int_{\mathbb{R}^n}|u(x,t)|^pdx\geq C\varepsilon^p(1+t)^{n-1-\frac{n+\mu-1}2 p}.
\end{equation}
\end{lem}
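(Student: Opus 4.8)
The plan is to test the equation against the separated profile $\psi_1(x,t):=\lambda_1(t)\varphi_1(x)$ (the $\eta=1$ case of our building blocks), which by \eqref{lambda} and \eqref{varphi} solves the conjugate equation $\psi_{1,tt}-\Delta\psi_1-\frac{\mu}{1+t}\psi_{1,t}+\frac{\mu}{(1+t)^2}\psi_1=0$, and then to combine a sharp lower bound for $\int u\psi_1\,dx$ with H\"older's inequality. First I would set $F(t):=\int_{\mathbb{R}^n}u(x,t)\varphi_1(x)\,dx$ and write the damping in divergence form, $u_{tt}+\frac{\mu}{1+t}u_t=(1+t)^{-\mu}\partial_t\big((1+t)^\mu u_t\big)$. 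Multiplying \eqref{main} by $(1+t)^\mu\varphi_1(x)$, integrating in $x$, using $\Delta\varphi_1=\varphi_1$ together with integration by parts (legitimate since $u(\cdot,t)$ has compact support for each $t$, so there are no spatial boundary terms), and discarding the nonnegative term $\int|u|^p\varphi_1\,dx$, I obtain the ordinary differential inequality
\begin{equation*}
\big((1+t)^\mu F'(t)\big)'\geq (1+t)^\mu F(t),\qquad\text{equivalently}\qquad F''+\tfrac{\mu}{1+t}F'-F\geq0,
\end{equation*}
with $F(0)=\varepsilon\int f\varphi_1\,dx\geq0$ and $F'(0)=\varepsilon\int g\varphi_1\,dx\geq0$, not both zero.

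Next I would extract exponential growth of $F$. A bootstrap on the integrated inequality first gives $F\geq0$ and $F'\geq0$ on the existence interval. The homogeneous equation $y''+\frac{\mu}{1+t}y'-y=0$ is solved, after the substitution $s=1+t$, $y=s^{(1-\mu)/2}w(s)$, by the modified Bessel functions $w=I_{(\mu-1)/2}(s),K_{(\mu-1)/2}(s)$; thus its decaying solution is exactly $y_-(t)=(1+t)^{-\mu}\lambda_1(t)=(1+t)^{(1-\mu)/2}K_{(\mu-1)/2}(1+t)$, with $y_-'(0)<0$ by the recurrence \eqref{K3}. Forming the Wronskian-type quantity $V(t):=(1+t)^\mu\big(F'y_--Fy_-'\big)$, the relations $\big((1+t)^\mu F'\big)'\geq(1+t)^\mu F$ and $\big((1+t)^\mu y_-'\big)'=(1+t)^\mu y_-$ give $V'\geq0$, while the signs of $y_-(0),y_-'(0)$ and the nontrivial data force $V(0)>0$. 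Since $V=(1+t)^\mu y_-^2\,(F/y_-)'$, integrating $(F/y_-)'\geq V(0)\big[(1+s)^\mu y_-^2\big]^{-1}$ and inserting the asymptotics \eqref{K1}, which give $(1+s)^\mu y_-(s)^2\sim e^{-2(1+s)}$, yields $F(t)/y_-(t)\gtrsim \varepsilon\,e^{2(1+t)}$ for $t>T_0$, hence $F(t)\gtrsim \varepsilon\,(1+t)^{-\mu/2}e^{1+t}\sim \varepsilon/\lambda_1(t)$. Multiplying by $\lambda_1(t)$ yields the key uniform bound $\int_{\mathbb{R}^n}u\psi_1\,dx=\lambda_1(t)F(t)\geq C\varepsilon$ for $t>T_0$. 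I expect this comparison step --- converting the differential inequality into a genuine exponential lower bound through the correct Bessel solution $y_-$ and its asymptotics --- to be the main obstacle, since one must identify $y_-$ precisely and control the integral of $\big[(1+s)^\mu y_-^2\big]^{-1}$.

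Finally I would close the argument by H\"older's inequality with conjugate exponent $p'=p/(p-1)$ over the support $\{|x|\leq t+\frac12\}$:
\begin{equation*}
C\varepsilon\leq\int_{|x|\leq t+\frac12}u\,\psi_1\,dx\leq\Big(\int_{\mathbb{R}^n}|u|^p\,dx\Big)^{1/p}\Big(\int_{|x|\leq t+\frac12}\psi_1^{\,p'}\,dx\Big)^{1/p'},
\end{equation*}
so that $\int|u|^p\,dx\geq (C\varepsilon)^p\big(\int_{|x|\leq t+\frac12}\psi_1^{\,p'}\,dx\big)^{-(p-1)}$. Using $\lambda_1(t)\sim(1+t)^{\mu/2}e^{-(1+t)}$ from \eqref{K1} and the plane-wave/Laplace asymptotics $\varphi_1(x)\sim c\,|x|^{-(n-1)/2}e^{|x|}$, the weight integrates to $\int_{|x|\leq t+\frac12}\psi_1^{\,p'}\,dx\lesssim (1+t)^{\frac{\mu p'}{2}+(n-1)-\frac{(n-1)p'}{2}}$, the exponential factors cancelling up to a constant. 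Raising to the power $p-1$ and using $p'(p-1)=p$ collapses the exponent to $\frac{(n+\mu-1)p}{2}-(n-1)$, giving exactly $\int_{\mathbb{R}^n}|u|^p\,dx\geq C\varepsilon^p(1+t)^{\,n-1-\frac{n+\mu-1}{2}p}$, which is \eqref{Priori}. This final step is routine once the weight asymptotics are in hand.
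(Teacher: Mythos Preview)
Your proof is correct and follows essentially the same route as the paper's (given in the Appendix): test against the separated profile $\lambda_1\varphi_1$, use the Bessel asymptotics \eqref{K1} to obtain the uniform lower bound $\int u\psi_1\,dx\gtrsim\varepsilon$, and close with H\"older plus the standard estimate for $\int_{|x|\le t+1}\psi_1^{p'}dx$. The only cosmetic difference is that the paper works with $G_1=\int u\psi_1\,dx$ and a first-order ODE solved via the integrating factor $(1+t)^\mu/\lambda_1^2$, while you work with $F=\int u\varphi_1\,dx$ and a second-order ODE handled by the Wronskian against $y_-=(1+t)^{-\mu}\lambda_1$; the two reductions are equivalent and yield the identical integral lower bound \eqref{G1}.
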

The proof of this lemma can be found in Tu-Lin \cite{Tu-Lin}. We summaries the key point in Appendix.
We now test equation \eqref{main} by $b_q(x,t)$.
\begin{eqnarray*}
\int_{\mathbb{R}^n}|u|^pb_qdx&=&\int_{\mathbb{R}^n}u_{tt}b_q-u\Delta b_q+\frac{\mu u_t}{1+t}b_qdx\\
&=&\int_{\mathbb{R}^n}u_{tt}b_q-u b_{q+2}+\mu\partial_t[\frac{b_q}{1+t}u]-\mu u\partial_t(\frac{b_q}{1+t})dx\\
&=&\int_{\mathbb{R}^n}u_{tt}b_q+\mu\partial_t[\frac{b_q}{1+t}u]-u\left( b_{q+2}+\frac{\mu b_q'}{1+t}-\frac{\mu b_q}{(1+t)^2}\right)dx\\
&=&\int_{\mathbb{R}^n}u_{tt}b_q+\mu\partial_t[\frac{b_q}{1+t}u]-ub_{qtt}dx\\
&=&\frac{d^2}{dt^2}\int_{\mathbb{R}^n}ub_qdx+\frac{d}{dt}\int_{\mathbb{R}^n}(\mu \frac {b_q}{1+t}u-2b_{qt}u)dx.
\end{eqnarray*}
Integrate this identity over $[0,t]$ three times, we obtain:
\begin{eqnarray*}
&&\frac12\int_0^t(t-\tau)^2\int_{\mathbb{R}^n}b_q|u|^pdxd\tau=\int_0^t\int_{\mathbb{R}^n}ub_qdxd\tau-\varepsilon t\int_{\mathbb{R}^n}fb_q(0)dx\\
&&+\int_0^t(t-\tau)\int_{\mathbb{R}^n}(\frac{\mu b_q}{1+\tau}u-2b_{qt}u)dxd\tau-\frac12\varepsilon t^2\int_{\mathbb{R}^n}gb_q(0)+\mu fb_q(0)-fb_{qt}(0)dx.
\end{eqnarray*}
Inserting \eqref{bq2} for $b_{qt}(0)$, we find the last integral is positive,
\begin{eqnarray*}&&\int_{\mathbb{R}^n}gb_q(0)+\mu fb_q(0)-fb_{qt}(0)dx\\
&&=\int_{\mathbb{R}^n}\left(gb_q(0)+f\int_0^1\eta^{\frac{\mu+1}2}K_{\frac{\mu+1}2}(\eta)\varphi_\eta(x)\eta^{q}d\eta\right) dx>0.
\end{eqnarray*}
We obtain following inequality:
\begin{eqnarray}
&&\frac12\int_0^t(t-\tau)^2\int_{\mathbb{R}^n}b_q|u|^pdxd\tau\nonumber\\
&&\leq\int_0^t\int_{\mathbb{R}^n}ub_qdxd\tau+\int_0^t(t-\tau)\int_{\mathbb{R}^n}(\frac{\mu b_q}{1+\tau}u-2b_{qt}u)dxd\tau.\label{Ineq}
\end{eqnarray}

Based on the estimation \eqref{Ineq}, we have following Lemma.
\begin{lem}
Let $n\geq2$, $p=p_S(n+\mu)$ and $q=\frac{n-\mu-1}2-\frac1p$, under the condition of Theorem 1.1, the functional
\begin{equation}
G(t)=\int_0^t(t-\tau)(1+\tau)\int_{\mathbb{R}^n}b_q(\tau,x)|u(\tau,x)|^pdxd\tau
\end{equation}
satisfies
\begin{equation}\label{G'(t)}
G'(t)\geq K (\ln(1+t))^{1-p}(1+t)\left(\int_0^t(1+\tau)^{-3}G(\tau)d\tau\right)^p\ \ t>2
\end{equation}
where $K$ is a constant which is independent of $\varepsilon$.
\end{lem}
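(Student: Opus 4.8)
The plan is to establish \eqref{G'(t)} from three ingredients: the elementary differentiation identity for $G$, a weighted space-time Hölder inequality that trades the nonlinear density $b_q|u|^p$ for the linear density $u\,b_q$, and the asymptotic profile \eqref{bq3}, which at the critical power $p=p_S(n+\mu)$ forces the relevant weight integral to be only logarithmically divergent. I first differentiate $G$: the factor $(t-\tau)$ annihilates the boundary contribution, so
\[
G'(t)=\int_0^t(1+\tau)\!\int_{\mathbb{R}^n}b_q(x,\tau)|u(x,\tau)|^p\,dx\,d\tau .
\]
Writing $F(\tau)=\int_{\mathbb{R}^n}b_q|u|^p\,dx$ and $D(\tau)=\int_{|x|\le\tau+1/2}b_q\,dx$, it remains to bound this weighted integral of $b_q|u|^p$ from below.

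Since $\mathrm{supp}\,u\subset\{|x|\le t+1/2\}$, I apply Hölder's inequality in $(x,\tau)$ against the measure $(1+\tau)b_q\,dx\,d\tau$. Splitting $w(\tau)\,u\,b_q=\big(u[(1+\tau)b_q]^{1/p}\big)\big(w\,b_q[(1+\tau)b_q]^{-1/p}\big)$ and using $p'/p=1/(p-1)$ gives, for any positive weight $w$,
\[
\int_0^t w(\tau)\!\int_{\mathbb{R}^n}u\,b_q\,dx\,d\tau\le \big(G'(t)\big)^{1/p}\Big(\int_0^t w(\tau)^{p'}(1+\tau)^{-\frac1{p-1}}D(\tau)\,d\tau\Big)^{\frac1{p'}},
\]
so that $G'(t)\ge \Phi(t)^p\,M(t)^{-(p-1)}$, where $\Phi(t)=\int_0^t w(\tau)\int_{\mathbb{R}^n}u\,b_q\,dx\,d\tau$ and $M(t)=\int_0^t w^{p'}(1+\tau)^{-1/(p-1)}D\,d\tau$. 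The task is thereby reduced to bounding $\Phi$ from below and $M$ from above.

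For $M$ I use \eqref{bq3}: since $q=\frac{n-\mu-1}2-\frac1p<\frac{n-\mu-1}2$ we sit in the first regime, so integrating $b_q\sim(1+\tau)^{\mu/2}(1+\tau+|x|)^{-q-\mu/2}$ over the ball yields $D(\tau)\sim(1+\tau)^{n-q}$ with $n-q=\frac{n+\mu+1}2+\frac1p$. I then choose $w(\tau)=(1+\tau)^{\frac1p-2}$, the unique power for which the integrand of $M$ equals $(1+\tau)^{-1}$, so that $M(t)\sim\ln(1+t)$ and $M(t)^{-(p-1)}\sim(\ln(1+t))^{1-p}$. This is exactly where criticality enters: using $\gamma(p,n+\mu)=0$ one checks the clean identity $n-q=\frac{p+1}{p-1}$, and it is this identity that simultaneously collapses the exponent in $M$ to $-1$ and makes the very same $w$ produce, in the next step, the extra factor $(1+t)^{1/p}$ demanded by the right-hand side of \eqref{G'(t)}; for $p<p_S(n+\mu)$ the exponent in $M$ would be positive and one would recover the polynomial lifespan of \cite{Tu-Lin} instead.

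It remains to show $\Phi(t)\gtrsim (1+t)^{1/p}\int_0^t(1+\tau)^{-3}G(\tau)\,d\tau$, which together with the two previous displays yields \eqref{G'(t)}. Here I invoke \eqref{Ineq} together with \eqref{bq2}, rewritten as $b_{qt}=\frac{\mu}{1+t}b_q-\tilde b_q$ with $\tilde b_q\ge0$, and the nonnegativity of the initial-data contributions already verified just before \eqref{Ineq}; this should furnish a lower bound for the weighted time-integrals of $y(\tau)=\int_{\mathbb{R}^n}u\,b_q\,dx$ in terms of $\int_0^\tau(\tau-\sigma)F(\sigma)\,d\sigma$. Inserting this into $\Phi$ and interchanging the order of integration, the inner kernels $\int_\sigma^t(1+\tau)^{1/p-2}(\tau-\sigma)\,d\tau\sim(1+t)^{1/p}$ (for $\sigma\ll t$) and $\int_\sigma^t(1+\tau)^{-3}(\tau-\sigma)\,d\tau\le\frac1{2(1+\sigma)}$ match the $(1+\tau)$-weights and reproduce the factor $(1+t)^{1/p}$. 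I expect this last step to be the main obstacle: $G$ is built from $|u|^p$ while $\Phi$ is linear in $u$, so bridging them forces one to control the auxiliary quantity $z(\tau)=-\frac{\mu}{1+\tau}y(\tau)+2\int_{\mathbb{R}^n}\tilde b_q\,u\,dx$ and the initial data with the correct signs, and to verify that the time-weight bookkeeping reproduces $(1+\tau)^{-3}$ exactly rather than merely up to a spurious power of $(1+t)$.
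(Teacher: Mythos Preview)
Your approach diverges from the paper's at a structural level, and the gap you yourself flag in the last paragraph is genuine and not repairable within your framework.

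The paper does not introduce a free time-weight $w(\tau)$ at all. It works directly with inequality \eqref{Ineq}: the left side is computed \emph{exactly} as $2(1+t)^2\int_0^t(1+\tau)^{-3}G(\tau)\,d\tau$ via two integrations by parts (using $(1+\tau)F(\tau)=G''(\tau)$ and the identity $\partial_\tau^2[(t-\tau)^2(1+\tau)^{-1}]=2(1+t)^2(1+\tau)^{-3}$), while the right side is split into $I$, $II_1$, $II_2$ and each piece is bounded from \emph{above} by H\"older against $(G'(t))^{1/p}$. The logarithm arises \emph{only} from $II_2=2\int_0^t(t-\tau)\int_{\mathbb{R}^n}\tilde b_{q+1}u\,dx\,d\tau$: since $q+1>\tfrac{n-\mu-1}{2}$, the function $\tilde b_{q+1}$ falls in the second regime of \eqref{bq3}, and the resulting factor $b_q(\tilde b_{q+1}/b_q)^{p'}(1+\tau)^{-p'/p}\sim(1+\tau)^{1-n}(\tau+1-|x|)^{-1}$ produces $\ln(1+\tau)$ after the spatial integration over $\{|x|\le\tau+\tfrac12\}$. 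The terms $I$ and $II_1$ contribute no logarithm.

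In your scheme the logarithm is manufactured by the choice $w(\tau)=(1+\tau)^{1/p-2}$ in $M(t)$; this is an artefact of the weight, not the intrinsic light-cone mechanism. More seriously, your lower bound on $\Phi$ cannot be extracted from \eqref{Ineq}: after two time-integrations the underlying identity gives
\[
y(\tau)+\int_0^\tau z(\sigma)\,d\sigma\ \ge\ \int_0^\tau(\tau-\sigma)F(\sigma)\,d\sigma,
\]
so multiplying by $w$ and integrating yields a lower bound for $\Phi(t)+\int_0^t w(\tau)\!\int_0^\tau z\,d\sigma\,d\tau$, not for $\Phi(t)$ itself. To drop the $z$-contribution you must upper-bound $\int_{\mathbb{R}^n}\tilde b_{q+1}u\,dx$ (since $u$ has no sign), and the only available tool is H\"older against $b_q|u|^p$ --- which is exactly the paper's $II_2$ estimate and forces you into the second regime of \eqref{bq3} that your argument never invokes. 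Carrying this out would add a second logarithmic factor on top of the one already in $M$, spoiling the exponent $1-p$. In short, your route hides rather than avoids the light-cone analysis driving the paper's proof, and as written it does not close.
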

\begin{proof}
We note that the restriction \eqref{restri} on $q$ require $\mu<\mu_*:=\frac{n^2+n+2}{n+2}$.
Simple calculation gives
\begin{eqnarray*}
G'(t)&=&\int_0^t(1+\tau)\int_{\mathbb{R}^n}b_q(\tau,x)|u(\tau,x)|^pdxd\tau,\\
G''(t)&=&(1+t)\int_{\mathbb{R}^n}b_q(t,x)|u(t,x)|^pdx.
\end{eqnarray*}
We estimate the righthand side of \eqref{Ineq}. By using the finite propagation speed and H\"{o}lder inequality, we have
$$I:=\int_0^t\int_{\mathbb{R}^n}ub_qdxd\tau\leq(G'(t))^{\frac1p}\left(\int_0^t\int_{|x|\leq \tau+1}b_q(\tau)(1+\tau)^{-\frac{p'}{p}}dxd\tau\right)^{\frac1{p'}}.$$
Applying \eqref{bq3} for $q=\frac{n-\mu-1}2-\frac1p<\frac{n-\mu-1}2$, such that for $t>T_0$
\begin{eqnarray*}
&&\int_{0}^t\int_{|x|\leq \tau+1}b_q(\tau)(1+\tau)^{-\frac{p'}{p}}dxd\tau\\
&=&\int_{0}^{T_0}\int_{|x|\leq \tau+1}b_q(\tau)(1+\tau)^{-\frac{p'}{p}}dxd\tau+\int_{T_0}^t\int_{|x|\leq \tau+1}b_q(\tau)(1+\tau)^{-\frac{p'}{p}}dxd\tau\\
&\sim&C_1(T_0)+\int_0^t\int_{|x|\leq \tau+1}(\tau+1)^{\frac\mu2}(\tau+1+|x|)^{-q-\frac{\mu}2}(1+\tau)^{-\frac{p'}{p}}dxd\tau\\
&\leq&\int_0^t\int_{|x|\leq \tau+1}(\tau+1)^{-q-\frac{p'}{p}}dxd\tau\\
&\leq&C(T_0)(1+t)^{n-q-\frac{p'}p+1}.
\end{eqnarray*}
Since $q=\frac{n-1-\mu}{2}-\frac1p$ and $p=p_S(n+\mu)$, utilizing the quadratic form \eqref{qua}, we have
\begin{equation}\label{pS}
n-q-\frac{p'}{p}=p'.
\end{equation}
So we obtain
\begin{equation}\label{I}
I\leq C(1+t)^{1+\frac1{p'}}(G'(t))^{\frac1p}.
\end{equation}
For the second term, inserting \eqref{bq2}, we have
\begin{eqnarray*}
&&II:=\int_0^t(t-\tau)\int_{\mathbb{R}^n}(\frac{\mu b_q}{1+\tau}u-2b_{qt}u)dxd\tau
\\
&&=\int_0^t(t-\tau)\int_{\mathbb{R}^n}\left(\frac{-\mu b_q}{1+\tau}\right.\left.+2\int_0^1(\eta(\tau+1))^{\frac{\mu+1}2}K_{\frac{\mu+1}2}(\eta(\tau+1))\varphi_\eta(x)\eta^qd\eta\right) udxd\tau\\
&&=-\mu\int_0^t\frac{t-\tau}{1+\tau}\int_{\mathbb{R}^n}b_qudxd\tau\\
&&+2\int_0^t(t-\tau)\int_{\mathbb{R}^n}\left(\int_0^1(\eta(\tau+1))^{\frac{\mu+1}2}K_{\frac{\mu+1}2}(\eta(\tau+1))\varphi_\eta(x)\eta^qd\eta \right)udxd\tau\\
&&:=II_1+II_2.
\end{eqnarray*}
The estimate of $II_1$ is similar as $I$, by H\"{o}lder inequality and \eqref{bq3},
\begin{eqnarray*}
II_1&=&-\mu\int_0^t\frac{t-\tau}{1+\tau}\int_{\mathbb{R}^n}b_qudxd\tau\\
&\leq&\mu\left(\int_0^t\int_{\mathbb{R}^n}(1+\tau)b_q |u|^pdxd\tau\right)^{\frac1p}\left(\int_0^t\int_{|x|<\tau+1}(\frac{t-\tau}{1+\tau}(1+\tau)^{-\frac1p}b_q^{\frac1{p'}})^{p'}dxd\tau\right)^{\frac1{p'}}\\
&=&\mu(G'(t))^{\frac1{p}}\left(\int_0^t(t-\tau)^{p'}(1+\tau)^{-p'(1+\frac1p)}\int_{|x|<\tau+1}b_q(x,\tau)dxd\tau\right)^{\frac1{p'}}\\
&\leq&\mu(G'(t))^{\frac1{p}}\left(C_2(T_0)+C\int_{T_0}^t(t-\tau)^{p'}(1+\tau)^{-p'(1+\frac1p)+n-q}d\tau\right)^{\frac1{p'}}.
\end{eqnarray*}
Since $q=\frac{n-1-\mu}{2}-\frac1p$ with $p=p_S(n+\mu)$ which implies $-p'(1+\frac1p)+n-q=0$,
$$\int_{T_0}^t(t-\tau)^{p'}(1+\tau)^{-p'(1+\frac1p)+n-q}d\tau=\int_{T_0}^t(t-\tau)^{p'}d\tau\leq(1+t)^{1+p'}.$$
Thus
\begin{equation}\label{II1}
II_1\leq C(\mu,\ T_0)(1+t)^{1+\frac1{p'}}(G'(t))^{\frac1{p}}.
\end{equation}
We now estimate $II_2$. Define
$$\widetilde{b}_{q}(x,t)=\int_0^1(\eta(\tau+1))^{\frac{\mu+1}2}K_{\frac{\mu+1}2}(\eta(\tau+1))\varphi_\eta(x)\eta^{q-1}d\eta,$$
with same process as $(iii)$ in Lemma 2.1, one can prove that $\widetilde{b}_{q}(x,t)$ shares the same estimation \eqref{bq3} of $b_{q}(x,t)$ for some $t>\widetilde{T}_0$.
As $q=\frac{n-1-\mu}2-\frac1p<\frac{n-1-\mu}2<q+1$, for $t>T_1:=\max(T_0,\widetilde{T_0})$,
$$\frac{\widetilde{b}_{q+1}(x,\ \tau)}{b_q(x,\ \tau)}\sim
(1+\tau)^{q+\frac{\mu-n+1}2}(\tau+1-|x|)^{-q-1+\frac{n-\mu-1}2}$$
which combines the H\"{o}lder inequality, it gives
\begin{eqnarray*}
II_2\leq C(T_1)+ (G'(t))^{\frac1p}\left(\int_{T_1}^t(t-\tau)^{p'}\int_{|x|\leq\tau+\frac12}b_q(\frac{\widetilde{b}_{q+1}}{b_q})^{p'}(1+\tau)^{-\frac{p'}{p}}dxd\tau\right)^{\frac1{p'}}.
\end{eqnarray*}
For $\tau>T_1$, due to $q=\frac{n-1-\mu}2-\frac1p$ and \eqref{pS},
\begin{eqnarray*}
&&b_q(\frac{\widetilde{b}_{q+1}}{b_q})^{p'}(1+\tau)^{-\frac{p'}{p}}\\
&\sim&(1+\tau)^{-q+p'(q+\frac{\mu-n+1}2)-\frac{p'}{p}}(\tau+1-|x|)^{p'(-q-1+\frac{n-\mu-1}2)}\\
&=&(1+\tau)^{1-n}(\tau+1-|x|)^{-1}.
\end{eqnarray*}
Thus
\begin{eqnarray*}
&&\int_{T_1}^t(t-\tau)^{p'}\int_{|x|\leq\tau+\frac12}b_q(\frac{\widetilde{b}_{q+1}}{b_q})^{p'}(1+\tau)^{-\frac{p'}{p}}dxd\tau\\
&\sim&\int_{T_1}^t(t-\tau)^{p'}\int_{|x|\leq\tau+\frac12}(1+\tau)^{1-n}(\tau+1-|x|)^{-1}dxd\tau\\
&\leq&\int_{T_1}^t(t-\tau)^{p'}\int_{0}^{\tau+\frac12}(\tau+1-r)^{-1}drd\tau\\
&\leq&\int_0^t(t-\tau)^{p'}\ln(\tau+1)d\tau\leq C(p)(1+t)^{1+p'}\ln(1+t).
\end{eqnarray*}
Consequently, we have
\begin{eqnarray}
II_2&\leq& C(\mu,\ T_0)(1+t)^{1+\frac1{p'}}(G'(t))^{\frac1{p}}\nonumber\\
&+&C(T_1,p)(\ln(1+t))^{\frac1{p'}}(1+t)^{1+\frac1{p'}}(G'(t))^{\frac1p}.\label{II2}
\end{eqnarray}
Plugging the estimates \eqref{I}, \eqref{II1} and \eqref{II2} into \eqref{Ineq}, we obtain for $t>2$,
\begin{eqnarray*}
&&\int_0^t(t-\tau)^2\int_{\mathbb{R}^n}b_q|u|^pdxd\tau\nonumber\\
&\leq& C\left((1+t)^{1+\frac1{p'}}(G'(t))^{\frac1{p}}+(\ln(1+t))^{\frac1{p'}}(1+t)^{1+\frac1{p'}}(G'(t))^{\frac1p}\right)\\
&\leq& C(\ln(1+t))^{\frac1{p'}}(1+t)^{1+\frac1{p'}}(G'(t))^{\frac1p}.
\end{eqnarray*}
On the other hand,
\begin{eqnarray*}
&&\int_0^t(t-\tau)^2\int_{\mathbb{R}^n}b_q|u|^pdxd\tau=\int_0^t(t-\tau)^2(1+\tau)^{-1}G''(\tau)\tau\\
&=&\int_0^t\partial_{t}^2[(t-\tau)^2(1+\tau)^{-1}]G(\tau)d\tau=2(t+1)^2\int_0^t(1+\tau)^{-3}G(\tau)d\tau.\\
\end{eqnarray*}
So we conclude that for $t>2$,
$$G'(t)\geq K (\ln(1+t))^{1-p}(1+t)\left(\int_0^t(1+\tau)^{-3}G(\tau)d\tau\right)^p.$$
\end{proof}

\section{Proof of Main Theorem}
The following lemma is from \cite{Zhou14}.
\begin{lem}
Suppose that $K(t)$ and $h(t)$ are all positive $C^2$ functions and satisfy
$$a(t)K''(t)+K'(t)\geq b(t)K^{1+\alpha}(t),\ \ \ \forall\ t\geq 0,$$
$$a(t)h''(t)+h'(t)\geq b(t)h^{1+\alpha}(t),\ \ \ \forall\ t\geq 0,$$
where $\alpha\geq0$, and
$$a(t),\ b(t)>0\ \ \ \forall\ t\geq 0.$$
If $$K(0)>h(0),\ \ K'(0)\geq h'(0).$$
Then we have
$$K'(t)>h'(t),\ \ \ \forall\ t\geq 0,$$
which implies
 $$K(t)>h(t),\ \ \ \forall\ t\geq 0.$$
\end{lem}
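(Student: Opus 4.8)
The plan is to work with the difference $w(t)=K(t)-h(t)$ and to show it stays positive by a first-contact argument, after first casting the two hypotheses into divergence form. I would begin by recording the logical structure of the two conclusions: once $w'(t)\ge 0$ is known with strictness for $t>0$, the fact that $w$ is nondecreasing together with $w(0)=K(0)-h(0)>0$ forces $w(t)>0$, i.e. $K(t)>h(t)$, for all $t\ge 0$; so it suffices to control the sign of $w'$. To that end I would introduce the integrating factor $m(t)=\exp\!\left(\int_0^t \frac{ds}{a(s)}\right)>0$, which is legitimate since $a>0$, and rewrite each hypothesis in divergence form: dividing $aK''+K'\ge bK^{1+\alpha}$ by $a$ and multiplying by $m$ gives $\left(mK'\right)'\ge m\,\frac{b}{a}\,K^{1+\alpha}\ge 0$, and likewise $\left(mh'\right)'\ge m\,\frac{b}{a}\,h^{1+\alpha}\ge 0$. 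In particular each of $mK'$ and $mh'$ is nondecreasing, which is the monotone skeleton on which the rest hangs.

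Next I would run the first-contact argument. Since $w(0)>0$ and $w$ is continuous, if $w$ failed to remain positive there would be a first time $t_0>0$ with $w(t_0)=0$ and $w>0$, i.e. $K>h>0$, on $[0,t_0)$; necessarily $w'(t_0)\le 0$. Because $s\mapsto b\,s^{1+\alpha}$ is nondecreasing for $s>0$ (as $b>0$, $\alpha\ge 0$), the ordering $K>h$ on $[0,t_0)$ yields $bK^{1+\alpha}\ge bh^{1+\alpha}$ there. The aim is to promote this pointwise comparison of the nonlinear terms to the inequality $\left(mw'\right)'\ge 0$ on $[0,t_0]$. Granting that, $mw'$ is nondecreasing, so $m(t)w'(t)\ge m(0)w'(0)=w'(0)\ge 0$, whence $w'\ge 0$ and $w$ is nondecreasing on $[0,t_0]$; but then $w(t_0)\ge w(0)>0$, contradicting $w(t_0)=0$. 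Thus no such $t_0$ exists and $w>0$ for all $t\ge 0$. Feeding $K>h$ back into the divergence-form inequalities, and using that $mw'$ cannot stay flat on a nondegenerate interval without forcing $w$ constant, then upgrades the bound to the strict inequality $K'>h'$ for $t>0$ together with $K>h$ for all $t\ge 0$, which is the assertion.

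I expect the main obstacle to be exactly the promotion step, namely establishing $\left(mw'\right)'\ge 0$ on $[0,t_0]$. Subtracting the two divergence-form inequalities bounds $\left(mK'\right)'$ below by $m\frac{b}{a}K^{1+\alpha}$ but controls $\left(mh'\right)'$ only from below by $m\frac{b}{a}h^{1+\alpha}$, so the sign of $\left(mw'\right)'=\left(mK'\right)'-\left(mh'\right)'$ is not automatic from the two hypotheses read naively. Closing this gap is the delicate heart of the comparison, and it is precisely here that both differential inequalities, the strict ordering $K>h$ on $[0,t_0)$, and the monotonicity of the nonlinearity $s\mapsto b\,s^{1+\alpha}$ must be brought to bear jointly, so that the $h$-contribution is measured against the $K$-contribution at the level of the nonlinear term rather than discarded. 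Once this sign is secured the remainder is routine, and the same integrating-factor monotonicity that drives the contradiction simultaneously delivers the strict inequality $K'>h'$ claimed in the statement.
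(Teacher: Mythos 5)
Your proposal contains a genuine gap, and you in fact name it yourself: the ``promotion step'' $\left(m w'\right)'\ge 0$ is never established, only announced as ``the delicate heart'' to be handled by bringing the hypotheses ``to bear jointly.'' No such argument exists, because the lemma as printed is false: with both differential inequalities pointing the same way ($\ge$), both $K$ and $h$ are supersolutions and nothing prevents $h$ from overtaking $K$. Concretely, take $a\equiv b\equiv 1$, $\alpha=0$, $K(t)=3e^{t}$, $h(t)=e^{3t}$: then $K''+K'=6e^{t}\ge K$, $h''+h'=12e^{3t}\ge h$, $K(0)=3>1=h(0)$, $K'(0)=3\ge 3=h'(0)$, yet $h(t)>K(t)$ for $t>\frac12\ln 3$. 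The statement carries a sign typo inherited from its source: in Zhou--Han \cite{Zhou14} the second function is a \emph{subsolution}, $a(t)h''(t)+h'(t)\le b(t)h^{1+\alpha}(t)$, and that is exactly how the lemma is used in Section 3 of this paper, where $H_2$ satisfies the strict reversed inequality \eqref{H_2}. Note also that the paper quotes the lemma from \cite{Zhou14} without proof, so there is no in-paper argument to compare against; the intended proof is Zhou--Han's.

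With the sign corrected, your integrating-factor scheme does close and is a legitimate variant of the standard argument: subtracting the divergence forms gives $\left(m w'\right)'\ge m\frac{b}{a}\left(K^{1+\alpha}-h^{1+\alpha}\right)\ge 0$ wherever $K\ge h$, your first-contact argument then runs exactly as you describe, and strictness of $K'>h'$ for $t>0$ follows because $K>h$ makes $m w'$ strictly increasing. The classical proof is shorter and avoids the integrating factor entirely: let $t_0>0$ be the first time $K'(t_0)=h'(t_0)$; on $[0,t_0]$ one has $K'\ge h'$ and hence $K>h$, so subtracting the two inequalities at $t_0$ yields $a(t_0)\left(K''-h''\right)(t_0)\ge b(t_0)\left(K^{1+\alpha}-h^{1+\alpha}\right)(t_0)>0$, contradicting that $K'-h'$ reaches a zero from above at $t_0$, which forces $(K'-h')'(t_0)\le 0$; the borderline case $K'(0)=h'(0)$ is handled by the same computation at $t=0$. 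As submitted, however, your proof is incomplete at precisely the step on which everything depends, and under the hypotheses as literally stated that step is unprovable.
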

Now we begin to prove the main theorem.
Making use of Lemma 2.3 \eqref{Priori} and Lemma 2.1 \eqref{bq3}, for $t>T_0$,
\begin{eqnarray*}
G(t)&=&\int_0^t(t-\tau)(1+\tau)\int_{\mathbb{R}^n}b_q(\tau,x)|u(\tau,x)|^pdxd\tau\\
&>&\int_{T_0}^t(t-\tau)(1+\tau)\int_{\mathbb{R}^n}b_q(\tau,x)|u(\tau,x)|^pdxd\tau\\
&\geq&\varepsilon^p\int_{T_0}^t(t-\tau)(1+\tau)^{1-q+n-1-\frac{n+\mu-1}2 p}d\tau.
\end{eqnarray*}
Since $q=\frac{n-\mu-1}2-\frac1p$, $p=p_S(n+\mu)$ the exponent
$$1-q+n-1-\frac{n+\mu-1}2p=\frac1p+\frac{n+\mu+1}2-\frac{n+\mu-1}2p=\frac1{2p}\gamma(n+\mu,p)=0.$$
So we have
\begin{equation}\label{25}
G(t)\geq C\varepsilon^p(t-T_0)^2>\frac12 C\varepsilon^pt^2\ \ \ \ \ \mbox{for}\ \ t>4T_0.
\end{equation}
At this moment, we are at exactly same position as Section $3$ of \cite{Zhou14}.
By similar argument, one can obtain the desired result. Here we only give brief outline for completeness.
Define
$$H(t)=\int_0^t(1+\tau)^{-3}G(\tau)d\tau,$$
then
$$H'(t)=(1+t)^{-3}G(t).$$
By \eqref{25}, we have
$$H(t)\geq C\varepsilon\int_0^t(1+\tau)^{-3}\tau^2d\tau\geq c_0\varepsilon^p\ln(1+t)\ \ \mbox{for}\ \ t>T_0,$$
and
$$H'(t)=(1+t)^{-3}G(t)\geq C\varepsilon^p(1+t)^{-3}t^2\geq c_0\varepsilon^p(1+t)^{-1}\ \ \mbox{for}\ \ t>T_0.$$
Plugging $G(t)=(1+t)^3H'(t)$ and $H(t)=\int_0^t(1+\tau)^{-3}G(\tau)d\tau$ into \eqref{G'(t)}, we have
$$((1+t)^3H'(t))'\geq K_0 (\ln(1+t))^{1-p}(1+t)H(t)^p,\ t>2$$
which deduce to
$$(1+t)^2H''(t)+3(1+t)H'(t)>K_0(\ln(1+t))^{1-p}H(t)^p,\ t>2.$$

Set $t+1=\exp(\tau)$ and define
$$H_0(\tau)=H(\exp(\tau)-1)=H(t),$$
one has
$$H_0'(\tau)=H'(t)\frac{dt}{d\tau}=(1+t)H'(t),$$
$$H_0''(\tau)=(1+t)^2H''(t)+(t+1)H'(t).$$
So we obtain following differential inequality system about $H_0(\tau)$, for $\tau>\mathcal{T}_0$
\begin{equation}
\begin{cases}
\begin{aligned}\label{H_0}
&H_0''(\tau)+2H_0'(\tau)>K_0\tau^{1-p}H_0^p(\tau),\\
&H_0(\tau)\geq c_0\varepsilon^p \tau,\\
&H'_0(\tau)\geq c_0\varepsilon^p.
\end{aligned}
\end{cases}
\end{equation}
Let $$H_1(s)=\varepsilon^{p^2-2p}H_0(\varepsilon^{-p(p-1)}s),$$
then we have
\begin{equation}
\begin{cases}
\begin{aligned}\label{H_1}
&\varepsilon^{p(p-1)}H_1''(s)+2H_1'(s)>K_0s^{1-p}H_1^p(s),\\
&H_1(s)\geq c_0s,\\
&H'_1(s)\geq c_0 .
\end{aligned}
\end{cases}
\end{equation}
On the other hand, let $H_3(s)$ satisfy the Ricatti equation
\begin{equation}
\begin{cases}
\begin{aligned}\label{H_3}
&H_3'(s)=\delta H_3^{\frac{p+1}2}(s),\ s\geq s_0,\\
&H_3(s)=c_0/4,
\end{aligned}
\end{cases}
\end{equation}
which blowups at finite time $s^*$ independent with $\varepsilon$. By suitable choosing the parameter $s_0,\ \delta$, so that $c_0\ll s_0\ll\frac1\delta$, one can further construct $H_2(s)=sH_3(s)$ which satisfies
\begin{equation}
\begin{cases}
\begin{aligned}\label{H_2}
&\varepsilon^{p(p-1)}H_2''(s)+2H_2'(s)<K_0s^{1-p}H_2^p(s),\\
&H_2(s)= c_0s_0/4,\\
&H'_2(s)< c_0 .
\end{aligned}
\end{cases}
\end{equation}

Applying Lemma 3.1, to $H_1$ and $H_2$, we have
$$H_1(s)>H_2(s)\ \ s\geq s_0,$$
which implies $H_1$ blows up before $s^*$, i.e, $s<s^*$. The transforming of variables $\tau=\varepsilon^{-p(p-1)}s$ and $t<\exp(\tau)-1$ give the final conclusion.

\section{Appendix}
In this appendix, we give proof of Lemma 2.4.
\begin{proof}
Let
$$\psi(x,t)=\lambda_1(t)\varphi_1(x),$$ and define the functional
$$G_1(t):=\int_{\mathbb{R}^n}u(x,t)\psi(x,t)dx.$$
Then H\"{o}lder inequality gives
\begin{equation}\int_{\mathbb{R}^n}|u(x,t)|^pdx\geq\frac{|G_1(t)|^p}{(\int_{|x|\leq t+1}\psi^{p'}(t,x)dx)^{p-1}}.\label{factor}\end{equation}
Testing \eqref{main} by $\psi(x,t)$, utilizing \eqref{lambda} and \eqref{varphi} for $\eta=1$, the integration by part gives
$$\int_{\mathbb{R}^n}(u_t\psi-u\psi_t+\frac{\mu}{1+s}u\psi) dx\bigg|_0^t=\int_0^t\int_{\mathbb{R}^n}|u|^p\psi dxds>0.$$
Thus we have
\begin{equation}\label{G'1}G_1'(t)+\big(\frac{\mu}{1+t}-2\frac{\lambda_1'(t)}{\lambda_1(t)}\big)G_1(t)\geq\varepsilon C_{f,g},\end{equation}
with $C_{f,g}:=\int_{\mathbb{R}^n}(\lambda_1(0)g(x)+(\mu-2\frac{\lambda_1'(0)}{\lambda_1(0)})f(x))\varphi_1(x)dx$. By the compact support of $f$ and $g$, $C_{f,g}$ is finite. Moreover, $C_{f,g}$ is positive. Since by \eqref{K4}
$$\lambda_1'(t)=\frac{\mu+1}2(1+t)^{\frac{\mu-1}2}K_{\frac{\mu-1}2}(1+t)-\frac12(1+t)^{\frac{\mu+1}2}\left(K_{\frac{\mu+1}2}(1+t)+K_{\frac{\mu-3}2}(1+t)\right),$$
then by \eqref{mono}, we have
\begin{eqnarray*}
\mu-2\frac{\lambda_1'(0)}{\lambda_1(0)}&=&\mu-2\frac{\frac{\mu+1}2K_{\frac{\mu-1}{2}}(1)-\frac12\left(K_{\frac{\mu+1}2}(1)+K_{\frac{\mu-3}{2}}(1)\right)}{K_{\frac{\mu-1}2}(1)}\\
&=&-1+\frac{K_{\frac{\mu+1}2}(1)+K_{\frac{\mu-3}2}(1)}{K_{\frac{\mu-1}2}(1)}>0.
\end{eqnarray*}
Multiplying $\frac{(1+t)^\mu}{\lambda^2(t)}$ on two sides of \eqref{G'1}, then integrating over $[0,t]$, we derive
\begin{equation} G_1(t)\geq\varepsilon C_{f,g}\int_0^t\frac{(1+t)K^2_{\frac{\mu-1}{2}}(1+t)}{(1+s)K^2_{\frac{\mu-1}{2}}(1+s)}ds.\label{G1}\end{equation}
The denominator can be estimated as
\begin{eqnarray}
&&\int_{|x|\leq t+1}\psi^{p'}(t,x)dx\leq \lambda_1^\frac{p}{p-1}(t)\int_{|x|\leq t+1}\varphi_1^{p'}(x)dx\nonumber\\
&&\leq C(1+t)^{n-1+\frac{p(\mu+1)-(n-1)p}{2(p-1)}}e^{\frac{p}{p-1}(t+1)}K^{\frac{p}{p-1}}_{\frac{\mu-1}2}(1+t).\label{deno}
\end{eqnarray}
Combing the estimate \eqref{G1}, \eqref{deno} and \eqref{factor}, we now have
\begin{eqnarray*}&&\int_{\mathbb{R}^n}|u(x,t)|^pdx\\
&&\geq C\varepsilon^p(1+t)^{\frac p2(2-n-\mu)+(n-1)}\\
&& \cdot e^{-p(t+1)}K^p_{\frac{\mu-1}2}(1+t)(\int_0^t\frac1{(1+s)K^2_{\frac{\mu-1}2}(1+s)}ds)^p.
\end{eqnarray*}
Applying \eqref{K1}, for $t>T_0$
\begin{eqnarray*}&&e^{-p(t+1)}K^p_{\frac{\mu-1}2}(1+t)(\int_0^t\frac1{(1+s)K^2_{\frac{\mu-1}2}(1+s)}ds)^p\\
&\geq& C(\frac{\pi}{2(1+t)})^\frac p2 e^{-2p(t+1)}(\int_{\frac t2}^t\frac{2}\pi e^{2(1+s)}ds)^p\\
&&=(\frac{\pi}{2(1+t)})^\frac p2 e^{-2p(t+1)}[\frac1\pi(e^{2(1+t)}-e^{2+t})]^p\geq C(p)(1+t)^{-\frac p2}.
\end{eqnarray*}
Thus
$$\int_{\mathbb{R}^n}|u(x,t)|^pdx\geq C_{1}\varepsilon^p(1+t)^{\frac p2(1-n-\mu)+(n-1)}\ \ \mbox{for}\ \ t>T_0.$$
\end{proof}


.
\newpage
\bibliographystyle{elsarticle-num}
\bibliography{References}

\end{document}